  \newtheorem{thm}{Theorem}[section]
\newtheorem{prop}[thm]{Proposition}
\newtheorem{conjecture}[thm]{Conjecture}
\newtheorem{definition}[thm]{Definition}
  \def\f{\mathbb{F}}
  \def\F{\mathbb{F}}
\def\R{\mathbb{R}}
\def\Q{\mathbb{Q}}
\def\Z{\mathbb{Z}}
\def\ord{\mathrm{ord}}
 \newcommand{\md}[1]{\,(\mathrm{mod} \,\, #1)}
\begin{document}

\title[Newton Polygons of L-functions Associated to Deligne Polynomials]
  {Newton Polygons of L-functions Associated to Deligne Polynomials}

\author{Jiyou Li}
\address{Department of Mathematics, Shanghai Jiao Tong University, Shanghai, P.R. China}
\email{lijiyou@sjtu.edu.cn}
%


\maketitle \numberwithin{equation}{section}
 \allowdisplaybreaks

\begin{abstract}
A conjecture of Le says that the Deligne polytope $\Delta_d$ is generically ordinary if $p\equiv 1\ (\!\bmod \ D(\Delta_d))$, where
$D(\Delta_d)$ is a combinatorial constant determined by $\Delta_d$.
In this paper a counterexample is given to show that the conjecture
is not true in general.
\end{abstract}

\section{Introduction}

       Let $\f_q$ be the finite field of $q$ elements of characteristic $p$.      Let $\psi$ be a fixed nontrivial additive character  on $\f_q$.
     For a Laurent polynomial $f\in\F_{q}\left[x_1^{\pm1}, \ldots, x_n^{\pm1} \right]$,  define its associated $L$-function by
 \[ \label{Lfunction} L^*(f,T) = \exp \left( \sum_{k=1}^{\infty} \sum_{x \in {(\f_{q^k})^*}} \psi ({f(x)}) \frac{T^k}{k} \right). \]
 A celebrated theorem of Dwork-Bombieri-Grothendieck implies that $L^*(f,T)$ is a rational function in $\Q[\zeta_p](x)$ \cite{DW62}, \cite{GR64}.
 Furthermore,   Deligne's theorem on the Riemann hypothesis \cite{De81} gives very nice descriptions of the Archimedean absolute values of its zeroes and poles.

Adolphson and Sperber \cite{AS89} show that if the Laurent polynomial $f$ satisfies a non-degenerate geometric condition, then  $L^*(f,T)^{(-1)^{n-1}}$ is indeed a polynomial in
$\Z[\zeta_p][x]$ of degree $n!\text{V}(\Delta(f))$.
 Here   $\Delta(f)$ is the {\it Newton polytope} (see \cite{GKZ94} for details) of $f$, defined to be the convex closure in $\R^n$ generated by the origin and all exponents of the nonzero monomials (viewed as lattice points in $\R^n$), and $V(\Delta(f))$ is the volume.

  Since the Archimedean absolute values of the zeroes of $L^*(F,T)^{(-1)^{n-1}}$ are determined explicitly,  it remains to study the most intriguing problem of determining    the non-Archimedean absolute values. Equivalently, it suffices to
 study the slopes of  the Newton polygon of $L^*(f,T)^{(-1)^{n-1}}$,  which is defined as follows.
\begin{definition}Let $g(x) =\sum_{i=0}^d c_i x^i \in 1 + x \Z[\zeta_p]$. The ($q$-adic) Newton polygon of $g(x)$, denoted by $NP(f)$, is defined to be the lower convex hull of the points $(i, \ord_qc_i)$ in the plane $\mathbb{R}^2$. An example is given in Figure 1.
\end{definition}
  Unfortunately, the computation of the Newton polygon of a given
  L-function is very hard in general.
  However, as shown by the Dwork theory, in many cases the combinatorial geometrical properties of the Newton polytope may give very nice algebraic properties of $f$.
  For instance, when $f$ is non-degenerate, Adolphson and Sperber \cite{AS89} show that  the Newton polygon of
    $L^*(f,T)^{(-1)^{n-1}}$ lies above a topological lower bound
     called Hodge polygon, which will be briefly introduced as follows.

 Let $\Delta$ be an $n$-dimensional convex integral polytope in $\mathbb{R}^n$. Let $C(\Delta)$ be the cone generated by $\Delta$ and the origin.  For a vector $u\in\R^n$, $w(u)$ is the smallest nonnegative real number $c$ satisfying $u \in c \Delta$.  If such $c$ does not exist, then we define $w(u) = \infty$.

  For a  co-dimension $1$ face $\delta$ not containing $0$ in $\Delta$, let $\sum_{i=1}^n e_i X_i=1$ be the equation of the hyperplane containipng $\delta$ in $\R^n$.  Since the coordinates of vertices spanning $\Delta$ are integers, the coefficients $e_i$ are uniquely determined rational numbers.  Let $D(\delta)$ be the least common denominator of $\{e_i,  1 \leq i \leq n\}$ and $D=D(\Delta)$ be the least common denominator of all $D(\delta)$, where $\delta$ runs over all the co-dimension $1$ faces of $\Delta$ which do not contain the origin.

  For an integer $k$, let
$W_{\Delta}(k) = \# \{u \in \Z^n\  | \ w(u) = \frac{k}{D}\}.$
Let $$H_{\Delta}(k) = \sum_{i=0}^{n} (-1)^i {n \choose i} W_{\Delta}(k-iD).$$ 
%
%
%
%
%
%

\begin{definition}
\label{fig3}
The Hodge polygon $HP(\Delta)$ of $\Delta$ is defined to be the lower convex polygon in $\R^2$ with vertices
$$\left( \sum_{k=0}^{m} H_{\Delta}(k), \frac{1}{D} \sum_{k=0}^m k H_{\Delta}(k) \right), 0\leq m\leq n D.$$
\end{definition}
Equivalently, $HP(\Delta)$ is the polygon starting from the origin with a side of  slope $k/D$ with horizontal length $H_{\Delta}(k)$ for each integer $0 \leq k \leq nD$.

For a finite integral polytope $\Delta \subset \R^n$, let $\mathcal{M}_p(\Delta)$ be the set of non-degenerate $f$ over $\overline{\f_p}$ with $\Delta(f)=f$.
To study how $NP(f)$ varies for $f \in \mathcal{M}_p(\Delta)$,
 by the Grothendieck specialization theorem \cite{WAN00},
the generic Newton polygon is given by
\[GNP(\Delta,p): = \inf_{f \in \mathcal{M}_p(\Delta)} NP(f).\]

   Some work of computing $GNP(\Delta,p)$
   in one-dimensional cases has been done \cite{ZHU03} and in certain two variable cases in \cite{NIU12}.

In \cite{AS89} Adolphson and Sperber proved that for $f \in \mathcal{M}_p(\Delta)$, the graph of $NP(f)$ lies above the graph of $HP(\Delta)$ with the same endpoints, which can be stated as the following inequality $NP(f) \geq HP(\Delta).$ See Figure 1 for an example.

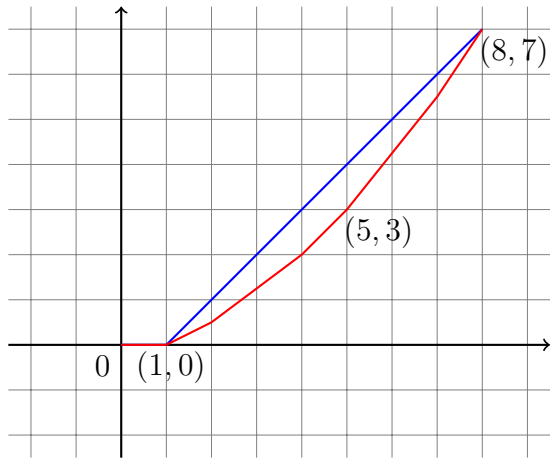
\begin{figure}
  \centering
\begin{tikzpicture}[scale=0.6]
\draw[step=1cm,gray,very thin] (-2.5, -2.5) grid (9.5,7.5);
\draw[->,thick] (-2.5,0) -- (9.5,0);
\draw[->,thick] (0,-2.5) -- (0,7.5);
\draw [thick, blue](0,0)--(1,0)--(8,7);
\draw [thick,red] (0,0)--(1, 0) -- (2, 1/2) -- (4, 2) -- (5, 3)--(7, 5.5)--(8, 7);
\draw (0,0) node[anchor=north east] {$0$};
\draw (1,0)+(0.1,-0.5) node {$(1,0)$};
\draw (5,3)+(0.7,-0.5) node {$(5,3)$};
\draw (8,7)+(0.7,-0.5) node {$(8,7)$};
\end{tikzpicture}
\caption{The Newton polygon (blue) and the Hodge polygon (red) of $ f(x_1, x_2) = x_1x_2^3+x_1^3x_2+x_1x_2$.}
\label{fig4}
\end{figure}

Combining these inequalities we have
\begin{prop}
For every prime $p$ and every $f \in \mathcal{M}_p(\Delta)$,
$$NP(f) \geq GNP(\Delta,p) \geq HP(\Delta).$$
\end{prop}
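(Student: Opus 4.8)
The plan is to deduce the proposition directly from the Adolphson--Sperber inequality $NP(f)\ge HP(\Delta)$ recalled above together with the definition $GNP(\Delta,p)=\inf_{f\in\mathcal{M}_p(\Delta)}NP(f)$; there is really nothing to prove beyond unwinding these definitions. Throughout, a Newton polygon is identified with the convex piecewise-linear function on $[0,n!V(\Delta)]$ whose graph it is, the relation $P\ge Q$ means $P(t)\ge Q(t)$ for every $t$ in the common domain, and the infimum is taken pointwise. This is legitimate because, by Adolphson--Sperber, for every non-degenerate $f\in\mathcal{M}_p(\Delta)$ the polynomial $L^*(f,T)^{(-1)^{n-1}}$ has degree $n!V(\Delta)$, so all the polygons $NP(f)$, $f\in\mathcal{M}_p(\Delta)$, share the horizontal endpoint $n!V(\Delta)$ (and, having equal total slope, the same right-hand vertex); by the Grothendieck specialization theorem cited from \cite{WAN00}, the pointwise infimum is moreover attained and is itself a genuine convex Newton polygon.

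Granting this, the first inequality $NP(f)\ge GNP(\Delta,p)$ is immediate: for each fixed $f\in\mathcal{M}_p(\Delta)$, the function $NP(f)$ is one member of the family over which the infimum is formed, hence dominates that infimum pointwise. For the second inequality, I would argue that $HP(\Delta)$ is a lower bound for the whole family: by Adolphson--Sperber, $NP(f)\ge HP(\Delta)$ for every $f\in\mathcal{M}_p(\Delta)$, with the same endpoints, so $HP(\Delta)(t)\le NP(f)(t)$ for all such $f$ and all $t$. Taking the infimum over $f$ on the right preserves the inequality, giving $HP(\Delta)(t)\le GNP(\Delta,p)(t)$ for all $t$, i.e. $GNP(\Delta,p)\ge HP(\Delta)$. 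Chaining the two displays yields $NP(f)\ge GNP(\Delta,p)\ge HP(\Delta)$, as claimed.

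The argument has no real obstacle, since all of the mathematical content sits in the two results we are permitted to quote --- the Adolphson--Sperber lower bound and degree formula, and Grothendieck specialization. The only point needing a moment's care is the bookkeeping that makes ``$\ge$'' and ``$\inf$'' meaningful, namely that every polygon in the family $\{NP(f):f\in\mathcal{M}_p(\Delta)\}$ has the same two endpoints as $HP(\Delta)$; this follows at once from the degree formula $n!V(\Delta)$ and from the fact that the total slope (the height of the right endpoint) is likewise determined by $\Delta$ alone.
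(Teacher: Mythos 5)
Your proof is correct and follows exactly the reasoning the paper intends (the paper simply writes ``Combining these inequalities we have'' and states the proposition): the first inequality is the definition of $GNP(\Delta,p)$ as an infimum, and the second follows by applying the Adolphson--Sperber bound $NP(f)\ge HP(\Delta)$ to every member of the family and then passing to the infimum. The extra remarks on shared endpoints and Grothendieck specialization are sensible bookkeeping but not logically required for the two inequalities themselves.
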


  A natural question is then to decide  when the second
inequality holds. For this purpose, we have the following definitions.

\begin{definition}
	A Laurent polynomial $f$ is called ordinary if NP($f$) = HP($\Delta$).
\end{definition}

\begin{definition}
 The family $\mathcal{M}_p(\Delta)$ is called {\it generically ordinary} if $GNP(\Delta,p)= HP(\Delta)$.
\end{definition}

 It is known that  a necessary condition for $GNP(\Delta,p)=HP(\Delta)$
 is $ p\equiv 1 \md{D(\Delta)}$.  Adolphson and Sperber \cite{AS87} conjectured that the converse is also true.
 \begin{conjecture}[Adophson-Sperber]
\label{AS}
 If $ p\equiv 1 \md{D(\Delta)}$, then $\mathcal{M}_p(\Delta)$ is  { generically ordinary}.
\end{conjecture}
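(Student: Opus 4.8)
The plan is to attack this through the decomposition theory for generic Newton polygons (due to Wan), which reduces the global assertion to a family of purely local ones. First I would invoke the \emph{facial decomposition}: writing $\delta_1,\dots,\delta_r$ for the codimension-one faces of $\Delta$ not containing the origin and $C_j = C(\delta_j)$ for the cone each spans together with $0$, one expects both $HP(\Delta) = \bigoplus_j HP(C_j)$ and $GNP(\Delta,p) = \bigoplus_j GNP(C_j,p)$, where $\bigoplus$ concatenates edges in increasing order of slope. Then $\mathcal{M}_p(\Delta)$ is generically ordinary precisely when every $\mathcal{M}_p(C_j)$ is, and since $D(\Delta) = \mathrm{lcm}_j D(\delta_j)$ the hypothesis $p \equiv 1 \md{D(\Delta)}$ descends to each cone $C_j$. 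This replaces $\Delta$ by a cone over a single facet.

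Next I would triangulate each facet $\delta_j$ into lattice simplices and apply the same decomposition philosophy to the resulting subdivision of $C_j$ into simplicial subcones, aiming to reduce to the case where $\Delta$ is an $n$-dimensional lattice simplex with a vertex at the origin, spanned by $0, v_1, \dots, v_n$. For such a simplex the defining hyperplane of the opposite facet is $\sum_{i=1}^n e_i X_i = 1$ with $D = D(\delta)$ the common denominator of the $e_i$, and I would compare a generic member of $\mathcal{M}_p(\Delta)$ with the \emph{diagonal} Laurent polynomial $\sum_i a_i x^{v_i}$. For diagonal $f$ the exponential sums factor into products of Gauss sums, so $NP(f)$ is computed exactly by Stickelberger's theorem: its slopes are sums of normalized $p$-adic valuations of Gauss sums, which under $p \equiv 1 \md{D}$ reduce to a clean combinatorial expression in the fractional parts $\{c e_i\}$ for $c$ ranging over residues mod $D$. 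The crux is then to prove this Gauss-sum polygon equals $HP(\Delta)$, i.e. the polygon assembled from the dilation-weight counts $W_\Delta(k)$ and the Euler-weighted numbers $H_\Delta(k)$.

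The main obstacle, I expect, lies exactly in this last comparison for a single simplicial cone. The Hodge polygon records lattice points weighted by the dilation weight $w(u)$ with an inclusion–exclusion (Euler characteristic) twist, whereas the Gauss-sum polygon is governed by the arithmetic of the fractional parts of $c e_i$; matching them demands a lattice-point identity that need not hold merely because $D \mid p-1$, particularly when the triangulation introduces interior lattice points whose weights are invisible to the Hodge data of the simplicial pieces. I would therefore first test the simplest non-unimodular simplices --- those with a single nontrivial coefficient, e.g. $e = (1/D, 0, \dots, 0)$ and its mild perturbations --- computing both polygons explicitly and searching for a weight $k/D$ at which $H_\Delta(k)$ disagrees with the corresponding Gauss-sum edge length. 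If such a mismatch occurs, the attempted proof turns into the construction of a counterexample, and the honest conclusion becomes that $p \equiv 1 \md{D(\Delta)}$, while necessary, is not sufficient for generic ordinarity --- which I would then isolate in an explicit small-dimensional $\Delta$.
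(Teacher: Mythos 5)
The statement you were asked to prove is not a theorem in this paper but a \emph{conjecture}, and the paper itself records (as Theorem~\ref{theoremD}, due to Wan) that it is \textbf{false} in general: Wan proved it for $n\leq 3$, exhibited explicit counterexamples for every $n\geq 4$, and salvaged only a weaker version in which $D(\Delta)$ is replaced by an effectively computable, typically larger modulus $D^*(\Delta)$. So there is no proof of the stated conjecture to compare against; any proposal that reaches a proof must have an error, and any proposal that reaches a counterexample is doing the right thing.

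Read in that light, your proposal is in fact almost self-correcting, but the framing and one technical step deserve comment. The facial decomposition (first paragraph) is sound and is exactly Wan's reduction: ordinarity of $\Delta$ is equivalent to ordinarity of each closed cone over a facet, and $D(\Delta)=\lcm_j D(\delta_j)$, so the congruence hypothesis passes to each facet. The trouble is the next move. You propose to triangulate each facet into lattice simplices and ``apply the same decomposition philosophy.'' Wan's facial decomposition theorem does \emph{not} license this: it decomposes along the actual codimension-one faces of $\Delta(f)$, and a finer triangulation changes the polytope, hence changes $HP$ and $GNP$ both; the Hodge polygons of the simplicial pieces do not simply concatenate to give $HP(\Delta)$, precisely because of the interior lattice points and shared lower-dimensional faces you flag. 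Handling this correctly is what forced Wan to develop the star, boundary, and collapsing decompositions, and even those do not reduce everything to the diagonal case under the bare hypothesis $D(\Delta)\mid p-1$. Your final paragraph identifies exactly this as the obstruction and correctly predicts that, in general, the obstruction is real rather than a gap in the argument. That is the right conclusion; you should state it as such (citing Wan's theorem) rather than presenting it as a contingency. A concrete counterexample in the spirit of this paper, for the facet $\delta_d$ with $n=2$, $4\mid d$, $p\equiv d/2+1\pmod d$, shows a non-stable weight and hence non-ordinarity even though $p\equiv 1\pmod{D(\Delta_d)}$ with $D(\Delta_d)=d/2$; this is precisely the mechanism you anticipated in your ``honest conclusion.''
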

This is a generalization of a conjecture of Dwork \cite{DW73} and Mazur \cite{MA72}.  Wan showed in \cite{WAN93} that Conjecture \ref{AS} is true for $n\leq 3$.  Furthermore,  he proved the following theorem.
\begin{thm}[Wan]
\label{theoremD}   For any $n\geq 4$, there are explicitly constructed counterexamples for the A-S conjecture.
However, a weaker version holds: there is an effectively computable integer $D^*(\Delta)$ such that if $p \equiv 1 \md{D^*(\Delta)}$, then $GNP(\Delta,p) = HP(\Delta)$.
\end{thm}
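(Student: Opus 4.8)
The statement has two essentially independent halves --- the existence of explicit counterexamples for $n\ge 4$, and the existence of an effective modulus $D^*(\Delta)$ --- and the plan is to treat both through the $p$-adic cohomology of Dwork, Adolphson and Sperber. Recall that this theory realises $L^*(f,T)^{(-1)^{n-1}}$ (essentially) as the characteristic polynomial of a Frobenius operator $\alpha(f)$ on a $p$-adic Banach space with a monomial basis indexed by the lattice points of the cone $C(\Delta)$, graded by the weight $w(\cdot)$; in a suitably normalised basis the matrix of $\alpha(f)$ has integral entries and, relative to the weight grading, is block upper triangular with the diagonal blocks accounting for the Hodge polygon $HP(\Delta)$. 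This is exactly what gives $NP(f)\ge HP(\Delta)$, and it shows that $\mathcal{M}_p(\Delta)$ is generically ordinary if and only if a finite list of ``Hasse determinants'' --- determinants of the reductions mod $p$ of the diagonal blocks, hence polynomials in the coefficients of $f$ whose own coefficients and exponents are combinatorially determined by $\Delta$ and by $p$ --- are not identically zero. So the whole question becomes whether these determinants are forced to vanish.

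For the counterexamples the natural first move is to reduce to pyramids. Using the star decomposition of $\Delta$ into the cones $C(\delta)$ over the facets $\delta$ with $0\notin\delta$, one proves a \emph{facial decomposition} statement: generic ordinariness of $\Delta$ is equivalent to generic ordinariness of each $C(\delta)$, and $D(\Delta)=\lcm_\delta D(C(\delta))$. It therefore suffices to exhibit, for each $n\ge 4$, a facet simplex $\delta$ for which $\mathcal{M}_p(C(\delta))$ is not generically ordinary at some prime $p\equiv 1\md{D(C(\delta))}$. For a pyramid with apex at the origin the weight $w$ is simply the linear functional equal to $1$ on each vertex of $\delta$, so the Hodge numbers and the Hasse determinants become completely explicit --- their entries are multinomial coefficients obtained by expanding a power of the generic Laurent polynomial and collecting the monomials of a prescribed weight. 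One then chooses the exponent vectors of $\delta$ so that a $\Z$-linear dependence among them forces, at the Hodge level of the first non-integral slope $1/D(C(\delta))$, one such determinant to vanish mod $p$ for every non-degenerate $f$ and for $p$ in a suitable residue class meeting $1\md{D(C(\delta))}$, and checks that this rank drop is not absorbed by the neighbouring Hodge blocks, so that $GNP(C(\delta),p)$ genuinely acquires a vertex strictly above $HP(C(\delta))$. This last verification --- an honest computation of the generic Newton polygon of the chosen four-dimensional pyramid, not merely of a single determinant --- is the more delicate point of this half.

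For the weaker statement the plan is: (i) prove the limit formula $GNP(\Delta,p)\to HP(\Delta)$ as $p\to\infty$ with $p\equiv 1\md{D(\Delta)}$, and then (ii) make it effective. For (i) one combines the Adolphson--Sperber estimates with the stabilisation, once $p$ is large in the progression, of the $p$-adic data entering $\alpha(f)$ --- the lifted coefficients, the splitting function, and in particular the $p$-adic weights of the lattice points in the relevant range, which then coincide with their Hodge weights $w(\cdot)$ --- which forces the Newton polygon of the generic Frobenius matrix down onto its lower bound $HP(\Delta)$. For (ii) one observes that for $p\equiv 1\md{D(\Delta)}$ the slopes of $NP(f)$ lie in $\tfrac1{D(\Delta)}\Z\cap[0,n]$, so the polygons that can occur as $GNP(\Delta,p)$ --- having at most $n!\,V(\Delta)$ unit-length sides, the two endpoints of $HP(\Delta)$, and lying on or above it --- form a finite set, every member of which other than $HP(\Delta)$ is at Euclidean distance $\ge\varepsilon(\Delta)>0$ from it; feeding a quantitative, coefficient-uniform lower bound for the valuations of the non-Hodge entries into (i) then yields an effectively computable $B(\Delta)$ with the property that $p>B(\Delta)$ and $p\equiv 1\md{D(\Delta)}$ force $GNP(\Delta,p)=HP(\Delta)$. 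Finally let $D^*(\Delta)$ be any effectively computable multiple of $D(\Delta)$ that exceeds $B(\Delta)$ and is divisible by whatever finite modulus controls the residue dependence of the Hasse determinants; then $p\equiv 1\md{D^*(\Delta)}$ forces $p\equiv 1\md{D(\Delta)}$ and, $p$ being prime, also $p\ge D^*(\Delta)+1>B(\Delta)$, so $\mathcal{M}_p(\Delta)$ is generically ordinary.

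The step I expect to be the main obstacle is the effective form of the limit in (ii). Adolphson and Sperber supply the qualitative convergence $GNP(\Delta,p)\to HP(\Delta)$, but extracting an explicit $B(\Delta)$ requires fully explicit, uniform $p$-adic lower bounds for the valuations of the non-Hodge entries of the generic Frobenius matrix --- equivalently, quantitative control of the splitting function and of the reduction of the Dwork trace formula, uniform over the whole coefficient space of $f$. On the counterexample side the corresponding difficulty is the global bookkeeping already noted: a single vanishing determinant only shows that $GNP$ may rise, so one must follow the entire chain of Hodge blocks to certify that it actually does.
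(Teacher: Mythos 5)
The paper does not actually prove this theorem; it is quoted from Wan \cite{WAN93}, and the only ``proof'' the paper offers is the remark immediately after it that Wan's argument rests on his decomposition theorems (facial/star, boundary, and later the collapsing decomposition). So the comparison here is really against the route Wan's proof is known to take, which the paper gestures at but does not reproduce.

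Your sketch of the counterexample half is consistent with that route: facial decomposition reduces generic ordinariness of $\Delta$ to that of the pyramids over its facets, and for $n\ge 4$ one exhibits a simplex whose pyramid fails to be generically ordinary for a positive-density set of $p\equiv 1\pmod{D(\Delta)}$, checked at the level of the Hasse determinants coming from the diagonal case. The caveat you raise --- that a single vanishing determinant must be propagated through the full chain of Hodge blocks --- is a real point, but it is a gap in the write-up, not in the strategy.

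Your sketch of the $D^*(\Delta)$ half contains a genuine gap and also does not match Wan's route. The discreteness premise, ``for $p\equiv 1\pmod{D(\Delta)}$ the slopes of $NP(f)$ lie in $\frac{1}{D(\Delta)}\Z$,'' is unjustified and false in general: the congruence alone imposes no such constraint, and indeed the very content of the counterexample half is that for $n\ge 4$ and a positive-density subset of $p\equiv 1\pmod{D(\Delta)}$ the generic polygon sits strictly above $HP(\Delta)$, with slopes of larger denominator. Consequently the set of polygons that can occur as $GNP(\Delta,p)$ is not a priori finite, your $\varepsilon(\Delta)$ need not exist, and the step ``make the limit effective, then take $D^*(\Delta)$ larger than $B(\Delta)$'' collapses. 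There is also a structural circularity: the limit formula you want in step (i) is, in Wan's treatment, itself a consequence of the decomposition machinery, not an independent input one can feed back in to produce $D^*$. Wan's actual $D^*(\Delta)$ involves no ``large $p$'' threshold at all: after decomposing $\Delta$ into integral simplices via the facial/star and boundary decompositions, he analyses the diagonal Laurent polynomial on each simplex through Gauss sums and Stickelberger's theorem, extracts from each simplex a purely combinatorial modulus, and takes $D^*(\Delta)$ to be their least common multiple, so that $p\equiv 1\pmod{D^*(\Delta)}$ forces every simplicial piece --- and hence, by decomposition, $\Delta$ itself --- to be generically ordinary.
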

 Roughly speaking, one of the most important techniques of Wan's proof is a decomposition theorem which decomposes $\Delta$ into small nice pieces and reduce the problem to small problems which are easier to be checked (see Figure 2 for an illustration). Then  Wan developed several very useful decomposition theorems including  the star decomposition, the boundary decomposition  \cite{WAN93} and the collapsing decomposition \cite{WAN04}.

Suppose $\Delta=\Delta(f)$ and thus we can write $f$ as
\[f=\sum_{I\in \Delta \cap Z^n} a_I x^I.\]
Similarly, for a face $\delta\subseteq\Delta$, define
\[f^{\delta}=\sum_{I\in \delta \cap Z^n} a_I x^I.\]

\begin{thm}[Wan's facial decomposition theorem]
Suppose $f$ is non-degenerate (See \cite{AS89} for the definition).
Let $\{\delta_i, 1\leq i\leq s\}$ be the set all co-dimension 1 faces of $\Delta(f)$.
Then $f$ is ordinary if and only if each $f^{\delta_i}$ is ordinary.
\end{thm}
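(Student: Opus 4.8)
The plan is to pass to the $p$-adic cohomology of Adolphson and Sperber and to show there that the obstruction to ordinarity ``localizes'' to the codimension-one faces of $\Delta:=\Delta(f)$.

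First I would recall the cohomological picture. For non-degenerate $f$ the theory of \cite{AS89} attaches to $f$ a Dwork--Frobenius operator $\alpha_f$ on a $p$-adic space $H^{n}(f)$ of dimension $n!\,V(\Delta)$ whose reciprocal characteristic polynomial is $L^{*}(f,T)^{(-1)^{n-1}}$, so that $NP(f)$ is the $q$-adic Newton polygon of $\alpha_f$. The lattice points of $C(\Delta)$, graded by $w$, give a basis adapted to the weight (Hodge) filtration, and $HP(\Delta)$ is the Newton polygon of the weight-graded operator; the inequality $NP(f)\ge HP(\Delta)$ reflects this. Hence $f$ is ordinary exactly when the ``leading term'' $\overline{\alpha}_f$ — the square matrix, with polynomial entries in the coefficients of $f$, obtained from the $p$-power map together with the top-weight part of $\exp(\pi\hat f)$ — is nonsingular. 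For a polytope having a single codimension-one face off the origin, for instance the pyramid $\Delta^{\delta}:=\mathrm{conv}(0,\delta)=\Delta(f^{\delta})$, this is exactly the Hasse--Witt-type criterion for ordinarity of $f^{\delta}$.

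Next I would exploit the facial (star) decomposition $C(\Delta)=\bigcup_{i=1}^{s}C(\delta_i)$, in which the maximal cones $C(\delta_i)$ meet only along lower-dimensional faces. I would choose a monomial basis of $H^{n}(f)$ subordinate to this subdivision: a non-degenerate $f$ restricts to a non-degenerate $f^{\delta_i}$ on each codimension-one face, so the relevant partial derivatives form a regular sequence on the coordinate ring of each subcone $C(\delta_i)$, which permits a compatible choice of ``surviving'' monomials in the relative interior of each $C(\delta_i)$. In such a basis $\overline{\alpha}_f$ should become block-triangular, with the diagonal block attached to $\delta_i$ equal to the analogous leading operator $\overline{\alpha}_i$ built from $f^{\delta_i}$; the off-diagonal (``cross-cone'') entries vanish in the graded reduction by a weight estimate, namely that for $u$ in the relative interior of $C(\delta_i)$ and $v$ in that of $C(\delta_j)$ with $j\ne i$, any monomial of $\exp(\pi\hat f)$ carrying $p^{-1}u$ to $v$ raises $w(v)$ strictly past the value permitted at the corresponding jump of the Hodge polygon, hence lands one step deeper in the filtration. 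Then $\overline{\alpha}_f$ is nonsingular if and only if every $\overline{\alpha}_i$ is, and since $\delta_i$ is the unique codimension-one face of $\Delta^{\delta_i}$ not containing the origin, the criterion above identifies ``$\overline{\alpha}_i$ nonsingular'' with ``$f^{\delta_i}$ ordinary''. Chaining these equivalences gives the theorem in both directions.

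I expect the main obstacle to be exactly this block decomposition, and within it the combinatorial bookkeeping. Beyond the weight estimate for the cross-cone entries — a convexity computation along the shared faces $C(\delta_i)\cap C(\delta_j)$ — one must show that the weight-graded dimensions of $H^{n}(f)$ split over the codimension-one faces so that each diagonal block ends up square of the size dictated by $\Delta^{\delta_i}$. This is delicate, because it is \emph{not} true in general that $HP(\Delta)$ is the concatenation (in slopes) of the $HP(\Delta^{\delta_i})$: already for $\Delta=[0,1]^{2}$ the two disagree, and even their endpoints differ. What actually decomposes is the block structure of Frobenius together with the way the single weight grading on $C(\Delta)$ restricts to the subcones, not the Hodge polygon itself; making this precise is essentially the content of the lattice-point decomposition lemmas underlying Wan's star and boundary decompositions \cite{WAN93}, after which the semilinear-algebra argument above goes through routinely.
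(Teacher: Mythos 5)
The paper does not prove this theorem; it states it and cites Wan \cite{WAN93}, so there is no in-paper argument to compare yours against. Evaluated on its own terms, your sketch locates the right ingredients---Adolphson--Sperber cohomology, the star decomposition $C(\Delta)=\bigcup_i C(\delta_i)$, a block structure for the leading (weight-graded) Frobenius, and a valuation estimate for cross-cone entries---but the two points you flag as ``delicate'' are precisely where the proof lives, and your outline leaves both open.

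Concretely: (a) The assertion that $\overline{\alpha}_f$ becomes block-triangular with diagonal blocks $\overline{\alpha}_i$ rests on an unproved valuation estimate, namely that any cross-cone transition $p^{-1}u\mapsto v$ (with $u$ interior to $C(\delta_i)$ and $v$ interior to $C(\delta_j)$, $j\ne i$) lands strictly deeper in the weight filtration. This is not automatic and is not a local convexity computation; Wan actually runs the estimate at the chain level, on the infinite-dimensional Banach space underlying the Fredholm determinant, where the bound can be made uniform across the decomposition, and only afterwards descends to cohomology. (b) You correctly observe that $HP(\Delta)$ is not the slope-wise concatenation of the $HP(\Delta^{\delta_i})$; your $[0,1]^2$ example is right (the square has slopes $0$ and $1$ while each triangular pyramid over a face contributes only slope $0$). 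But this means the clean chain ``diagonal block $\overline{\alpha}_i$ nonsingular $\Leftrightarrow$ $f^{\delta_i}$ ordinary $\Leftrightarrow$ that block of $\overline{\alpha}_f$ nonsingular'' does not close: one must reconcile the per-face Hodge data with $HP(\Delta)$ via inclusion--exclusion over the lower-dimensional overlaps $C(\delta_i)\cap C(\delta_j)$, which is the content of Wan's boundary-decomposition lemmas, and you invoke these only by name. Until (a) and (b) are supplied with actual estimates and lattice-point counts, neither direction of the ``if and only if'' is established; the skeleton is right, but both load-bearing steps are missing.
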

%

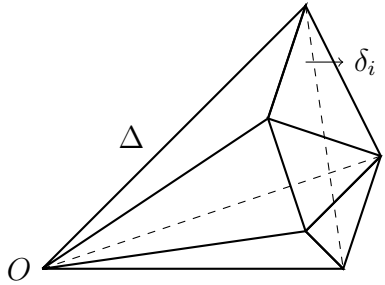
\begin{figure}
  \centering
\begin{tikzpicture}[scale=0.5]
 \draw [thick] (0, 0) -- (8, 0) -- (7,1)--(0,0);
   \draw [thick] (0,0)--(7,7)-- (6, 4) -- (0,0);
  \draw [thick] (7,1)-- (6, 4) -- (9,3)--(7,1);
    \draw [thick] (6,4)--(7,7) -- (9,3);
    \draw [thick] (9, 3) -- (8, 0) -- (7,1)--(9,3);
\draw (0,0) node[anchor=east] {$O$};
\draw (3,3.5) node[anchor=east] {$\Delta$};
 \draw [dashed] (0,0) -- (9,3);
  \draw [dashed] (7,7) -- (8,0);
  \draw[->,thin] (7, 5.5) -- (8,5.5) node[anchor=west]{$\delta_i$};
 \end{tikzpicture}
\caption{A facial decomposition for a 3 dimensional polytope.}
\label{fig5}
\end{figure}

In particular, it turns out that if all $f^{\delta_i}$'s are diagonal, then we have effective methods to determine if they are ordinary.
Since the cases for diagonal Laurent polynomials are very easy to deal with, this decomposition theorem will immediately give good nontrivial applications.

 Conjecture \ref{AS} is true in many important cases as showed by Wan \cite{WAN93} and more interesting examples are yet to be classified. Motivated by this,  Le \cite{LE12} studied an important class of Laurant polynomials, in which $f$ is given by
 \begin{equation}f(x_0, x_1, \ldots, x_n)=x_0 h(x_1, x_2, \ldots, x_n)+g(x_1, x_2, \ldots, x_n)+1/x_0,
\label{2.1}
\end{equation}
  where $\deg h=d$,  $\deg g<d/2$, and  $h$ is an $n$-variable Deligne polynomial, which  will be introduced in Section 2.
We are interested in the A-S conjecture for this family of polynomials.
  \begin{conjecture}
 Let $f$ be a generic polynomial given in \ref{2.1}  and let $\Delta=\Delta(f)$. Then
$GNP(\Delta,p) = HP(\Delta)$ if and only if $p \equiv 1 \md{D(\Delta)}$.
\end{conjecture}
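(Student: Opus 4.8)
\emph{Towards a proof.}
The plan is to handle the two implications separately. The forward one requires nothing new: it is already recorded above that $p \equiv 1 \md{D(\Delta)}$ is necessary for $GNP(\Delta,p) = HP(\Delta)$. For the converse the vehicle is Wan's facial decomposition theorem — a generic $f$ satisfies $NP(f) = HP(\Delta)$ if and only if each codimension-one facial polynomial $f^{\delta_i}$ is ordinary for the cone on $\delta_i$ — so I assume $p \equiv 1 \md{D(\Delta)}$ and aim to show every such $f^{\delta_i}$ is generically ordinary.

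The first step is to read off the codimension-one faces of $\Delta = \Delta(f)$ from the shape of $f$ in \ref{2.1}. In coordinates $(u_0,\dots,u_n)$ with $u_0$ dual to $x_0$, the exponents of $f$ sit at $(1,I)$ for $I$ in the support of $h$, at $(0,J)$ for $J$ in the support of $g$, at $(-1,\mathbf 0)$ from $1/x_0$, and at the origin. Since $\deg g < d/2$, the exponents coming from $g$ lie strictly below the segments from $(-1,\mathbf 0)$ to the height-one part, so $g$ contributes to no facet of $\Delta$ missing the origin; hence $\Delta$ is exactly the cone with apex $(-1,\mathbf 0)$ over the lift $\{1\}\times\Delta(h)$. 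Its facets not through $0$ come in two families: the top facet $\{1\}\times\Delta(h)$, cut out by $u_0 = 1$ (so $D(\delta) = 1$), on which $f$ restricts to $x_0 h$; and, for each facet $\tau$ of $\Delta(h)$ not containing $\mathbf 0$, a lateral facet $\mathrm{conv}\big(\{(-1,\mathbf 0)\}\cup(\{1\}\times\tau)\big)$, on which $f$ restricts to $x_0 h^\tau + x_0^{-1}$. If $\sum e_i' X_i = 1$ is the equation of $\tau$ in $\R^n$, a short computation gives $-u_0 + \sum 2e_i' u_i = 1$ for the lateral facet, so $D(\delta)$ is the least common denominator of the $2e_i'$; the least common multiple of these over all $\tau$ equals $D(\Delta)$, and therefore $p \equiv 1 \md{D(\Delta)}$ forces $p \equiv 1 \md{D(\delta)}$ on every facet $\delta$.

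Next I would establish generic ordinarity of the two families. The top-type facial polynomials $x_0 h^\delta$ are supported on hyperplanes through the origin, hence have degenerate Newton polytopes; working inside such a hyperplane one should match ordinarity of $x_0 h$ with ordinarity of $h$ relative to $\Delta(h)$, and likewise for the smaller top faces, reducing these to the Adolphson--Sperber conjecture for the lower-dimensional Deligne simplices — known in small dimension, and otherwise attacked by the same decomposition recursively. The lateral pieces $x_0 h^\tau + x_0^{-1}$ carry the real weight: when $\tau$ is a simplex, as for a full Deligne polynomial, the generic $h^\tau$ can be replaced by its vertex monomials, so $x_0 h^\tau + x_0^{-1}$ becomes a \emph{diagonal} Laurent polynomial on an $(n+1)$-simplex; its Newton polygon is then computable from Gauss sums, and one checks directly that it meets the Hodge polygon exactly when $p \equiv 1 \md{D(\delta)}$. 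When $\tau$ is not a simplex, $h^\tau$ is a genuine lower-dimensional Deligne-type polynomial and the facial decomposition must be applied once more.

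The step I expect to be the real obstacle is closing that recursion. The diagonal criterion covers only the simplicial facial pieces; the residual non-diagonal Deligne facial polynomials $x_0 h^\tau + x_0^{-1}$ are governed by nothing stronger than a lower-dimensional instance of the present statement, so the argument can genuinely stall here. Once the ambient dimension reaches $4$, where non-simplicial Deligne faces first become available, one should expect — in the spirit of Wan's constructions behind Theorem \ref{theoremD} — that such a piece can fail to be generically ordinary even under $p \equiv 1 \md{D(\delta)}$. Thus the facial decomposition cleanly reduces the conjecture to a single delicate question about non-diagonal facial polynomials of Deligne polynomials: the ``only if'' direction is robust, but the ``if'' direction stands or falls with those residual pieces, and that is exactly where I expect the statement to be put to the test — and, in high enough dimension, to need repair.
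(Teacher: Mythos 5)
The statement you are trying to prove is a \emph{conjecture} in this paper, and the paper's main theorem is an explicit counterexample to its slightly stronger sibling (Conjecture \ref{Con1.10}, with $\widetilde{GNP}$ in place of $GNP$). So the real question is whether your sketch correctly locates the breaking point, and it does not. The fatal step is your claim that for the diagonal lateral piece $x_0h^\tau+x_0^{-1}$ ``one checks directly that it meets the Hodge polygon exactly when $p\equiv 1\md{D(\delta)}$.'' That check fails whenever $d$ is even. For a Deligne $h$ the polytope $\Delta(h)$ is the simplex $\mathrm{conv}(\mathbf 0,de_1,\dots,de_n)$, so there is exactly one lateral facet $\delta_d=\mathrm{conv}(-e_0,e_0+de_1,\dots,e_0+de_n)$, always a simplex, with $D(\delta_d)=d$ for odd $d$ but $D(\delta_d)=d/2$ for even $d$. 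The ordinarity criterion for a diagonal polynomial is the stability of the weights $w(r)$ under $r\mapsto\{pr\}$, and these weights have denominator $d$, not $d/2$. Concretely (this is the paper's counterexample): take $n=2$, $4\mid d$, $p\equiv\frac d2+1\md{d}$, so that $p\equiv1\md{D(\Delta_d)}$. On the edge $\tau$ of $\delta_d$ joining $(-1,0,0)$ and $(1,d,0)$ the restriction of any $f$ of shape \ref{2.1} is the diagonal $ax_0x_1^d+bx_0^{-1}$; the associated rational points are $r=(k/d,k/d)$ with $w(r)=2k/d$, and for odd $k<d/2$ one finds $\{pr_i\}=k/d+1/2$, hence $w(\{pr\})=w(r)+1$. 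The weights are not stable, the diagonal piece is not ordinary for any coefficients, and by Wan's boundary decomposition theorem this forces $\widetilde{GNP}(\Delta_d,p)\ne HP(\Delta_d)$. So the breakdown is not, as you guess, in ambient dimension at least $4$ via non-simplicial Deligne faces (there are none here); it occurs already at $n=2$, inside the simplicial diagonal case you declared safe, and it is caused by the halving of $D(\Delta)$ for even $d$.

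One caveat partly in your favour: your reduction ``the generic $h^\tau$ can be replaced by its vertex monomials'' silently uses $\deg g<d/2$. For even $d$ the edge $\tau$ contains the interior lattice point $(0,d/2,0)$, which the shape \ref{2.1} forbids from carrying a coefficient; that is exactly why the counterexample applies to the restricted family $\widetilde{\mathcal{M}}_p(\Delta_d)$. For the full family $\mathcal{M}_p(\Delta_d)$ the facial restriction $ax_0x_1^d+cx_1^{d/2}+bx_0^{-1}$ is no longer diagonal, and the paper explicitly leaves open whether $GNP(\Delta_d,p)=HP(\Delta_d)$ can still hold. Thus the conjecture as literally stated is not settled by the paper either, but your proposed argument, which discards those interior points, could at best prove the $\widetilde{GNP}$ version --- and that version is false.
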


Recall $\mathcal{M}_p(\Delta)$ is the set of non-degenerate $f$ over $\overline{\f_p}$ with $\Delta(f)=f$.
 Let $\widetilde{\mathcal{M}}_p(\Delta)\subseteq \mathcal{M}_p(\Delta)$ be the set of non-degenerate $f$ of shape \ref{2.1} with $\Delta(f)=\Delta$, and let
 \[\widetilde{GNP}(\Delta,p): = \inf_{f \in \widetilde{\mathcal{M}}_p(\Delta)} NP(f).\]
 Since clearly $\widetilde{GNP}({\Delta},p) \geq {GNP}({\Delta},p)\geq HP(\Delta),$ a slightly stronger conjecture arises.
\begin{conjecture}[Le, 2013]
$\widetilde{GNP}({\Delta},p) = HP(\Delta)$ if and only if $p \equiv 1 \md{D(\Delta))}$.
\label{Con1.10}
\end{conjecture}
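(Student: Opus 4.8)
The plan is to \emph{refute} Conjecture~\ref{Con1.10}. Its ``only if'' half is the known necessary condition, so I would disprove the ``if'' half: I want a degree $d$, a number of variables $n$, and a prime $p$ with $p\equiv1\md{D(\Delta_d)}$ for which the generic $f$ of shape~\ref{2.1} is \emph{not} ordinary, so that $\widetilde{GNP}(\Delta_d,p)>HP(\Delta_d)$. First I pin down $\Delta_d$. With $\sigma$ the standard $n$-simplex, the exponents of $f$ are $(1,I)$ for $I\in d\sigma\cap\Z^n$ (from $x_0h$), $(0,J)$ with $\deg J<d/2$ (from $g$), and $(-1,\mathbf0)$ (from $1/x_0$), and since every $g$-exponent satisfies $\sum_jJ_j<d/2$ it lies in the height-$0$ slice $(d/2)\sigma$ of $\mathrm{conv}\bigl(\{(-1,\mathbf0)\}\cup(\{1\}\times d\sigma)\bigr)$; hence $\Delta_d=\mathrm{conv}\bigl(\{(-1,\mathbf0)\}\cup(\{1\}\times d\sigma)\bigr)$, the pyramid over the dilated simplex with apex $(-1,\mathbf0)$. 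Its facets are the ``top'' facet $\{1\}\times d\sigma$, the $n$ ``coordinate'' facets $\Delta_d\cap\{x_j=0\}$, and the ``far'' facet $\delta_\infty=\mathrm{conv}\bigl(\{(-1,\mathbf0)\}\cup(\{1\}\times\{\sum_jI_j=d\})\bigr)$; the coordinate facets pass through the origin, while the top and far facets have lattice equations $x_0=1$ and $-x_0+\tfrac2d\sum_jx_j=1$, so $D(\Delta_d)=d/\gcd(2,d)$.

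\textbf{Facial reduction.} By Wan's facial decomposition theorem the generic $f$ of shape~\ref{2.1} is ordinary iff each of its facial pieces is generically ordinary. The pieces are: $f^{\mathrm{top}}=x_0h$ (the height-$0$ monomials of $g$ do not reach the top facet); $f^{\delta_j}=x_0h^{(j)}+g^{(j)}+1/x_0$, where $h^{(j)}$ and $g^{(j)}$ are the restrictions to $x_j=0$ — this is again of shape~\ref{2.1} in one fewer variable, with the Deligne hypothesis inherited, so the reduction is recursive within the family; and $f^{\delta_\infty}=x_0h^{(\infty)}+1/x_0$, where $h^{(\infty)}$ is the degree-$d$ leading form of $h$, generic and (by the Deligne hypothesis) smooth, and no $g$-monomial reaches $\delta_\infty$. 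Iterating the first type, generic ordinariness of $f$ is equivalent to generic ordinariness of all the building blocks $x_0\,h_k$ and $x_0\,h_k^{\mathrm{hom}}+1/x_0$ for $1\le k\le n$ ($h_k$ a generic degree-$d$ polynomial, $h_k^{\mathrm{hom}}$ a generic degree-$d$ form, both in $k$ variables). The Newton polytope of $x_0h_k$ is the $(k{+}1)$-simplex on $(0,\mathbf0),(1,\mathbf0),(1,de_1),\dots,(1,de_k)$, whose only non-origin facet is $x_0=1$ (denominator $1$); that of $x_0h_k^{\mathrm{hom}}+1/x_0$ is the $(k{+}1)$-simplex on $(-1,\mathbf0),(0,\mathbf0),(1,de_1),\dots,(1,de_k)$, whose only non-origin facet is $-x_0+\tfrac2d\sum_jx_j=1$ (denominator $d/\gcd(2,d)=D(\Delta_d)$). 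For $k\le2$ all these polytopes live in dimension $\le3$, so by Wan's theorem each block is generically ordinary whenever its own modulus divides $p-1$; nothing goes wrong, which is why a counterexample, if it exists, needs $n\ge3$.

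\textbf{Breaking a block.} It thus suffices to find $d$, some $k\ge3$, and a prime $p\equiv1\md{D(\Delta_d)}$ for which one of the simplex blocks is generically non-ordinary. The block $x_0h_k^{\mathrm{hom}}+1/x_0$ is the cleanest target: the denominator of its polytope is already $D(\Delta_d)$, so the primes at which it could be ordinary are exactly the $p\equiv1\md{D(\Delta_d)}$, and one only has to exhibit one at which it is not. Two routes: (a) produce a proper face $\tau$ of this simplex such that the coned polytope $\mathrm{conv}(\tau\cup\{0\})$ has a lattice-hyperplane denominator $D'$ with $D'\nmid p-1$ (while $D(\Delta_d)\mid p-1$) — then the known necessary condition forces the facial piece along $\tau$ to be non-ordinary for \emph{every} member, and the facial decomposition theorem pushes this up to $f$; or (b) compute the Hodge polygon $HP$ of the simplex (an explicit lattice-point count) and a lower bound for the $q$-adic Newton polygon valid across the whole family — using the diagonal model $x_0\sum_jc_jx_j^d+1/x_0$ together with the Stickelberger/Gauss-sum formula for the slopes of diagonal $L$-functions — and exhibit a strict gap at a prime in the correct residue class. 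Either way the facial decomposition theorem then shows the generic $f$ of shape~\ref{2.1} is non-ordinary although $p\equiv1\md{D(\Delta_d)}$, contradicting Conjecture~\ref{Con1.10}; morally this transplants Wan's counterexamples of Theorem~\ref{theoremD} (to Conjecture~\ref{AS}) into the Deligne family.

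\textbf{The main obstacle.} The crux is the non-ordinariness itself. Route (a) needs a face whose cone with the origin manufactures a genuinely new denominator not dividing $D(\Delta_d)$; because the blocks are simplices with very few vertices, such faces are scarce, and it is not clear a priori that one exists — this combinatorial search over $(d,k)$ is the first hurdle. Failing that, route (b) is genuinely number-theoretic: one must bound the Newton polygon from below, strictly above the Hodge polygon, \emph{for every} member of the family — noting that the diagonal model only bounds the generic polygon from above, so one needs a complementary lower bound (again via Stickelberger-type estimates on the relevant Gauss sums, now over all faces that arise). Pinning down the smallest $(d,k)$ and a prime $p\equiv1\md{D(\Delta_d)}$ at which this strict gap occurs — and then verifying that the offending polytope really does arise as $\Delta(f^\delta)$ for a face $\delta$ of an honest shape-\ref{2.1} polynomial with $h$ Deligne — is where the real work lies.
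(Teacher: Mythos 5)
Your reduction by facial decomposition is on the right track, and you do correctly identify the ``far'' piece $f^{\delta_\infty}=x_0h^{(\infty)}+1/x_0$ (the paper's $f^{\delta_d}$) as the one that should break. But the proposal stops at an outline and, more importantly, both of your concrete routes have fatal gaps, and your guess about where the counterexample must live is wrong.

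\textbf{Route (a) is a dead end.} Every proper face $\tau$ of the far facet $\delta_d$ inherits the same supporting equation $-x_0+\tfrac2d\sum_j x_j=1$ restricted to its span, so $D\bigl(\mathrm{conv}(\tau\cup\{0\})\bigr)$ is again $d/\gcd(2,d)=D(\Delta_d)$; no face ever produces a new modulus $D'$ with $D'\nmid p-1$. The necessary condition $D\mid p-1$ is simply not sharp for the diagonal $L$-function attached to a forced diagonal face, which is exactly the point you need.

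\textbf{The ``$n\ge3$'' claim is wrong, and it hides the real mechanism.} The paper's counterexample has $n=2$ (ambient $\R^3$). You argue that for $k\le2$ each block is in dimension $\le3$ and therefore generically ordinary by Wan's theorem whenever the modulus divides $p-1$. But Wan's $n\le3$ theorem is a statement about the \emph{full} family $\mathcal{M}_p$ of non-degenerate polynomials with the given polytope. Le's constraint $\deg g<d/2$ deletes precisely the degree-$d/2$ slice $(0,J)$, $\sum_jJ_j=d/2$, of the polytope. Consequently the restriction of any member of the family to the edge $\tau=\mathrm{conv}\bigl((-1,\mathbf0),(1,de_1)\bigr)\subset\delta_d$ is \emph{always} the two-term diagonal $a\,x_0x_1^d+1/x_0$: the lattice point $(0,d/2,0,\dots,0)$ lies on the boundary of $\mathrm{conv}(0,\tau)$ but never appears in any $f$ of shape~\ref{2.1}. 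So your remark that ``the diagonal model only bounds the generic polygon from above'' misses that, on this face, the diagonal polygon is exactly the generic polygon for the restricted family, hence a valid lower bound across the whole family. The non-universality of Le's family, not any Wan-style $n\ge4$ obstruction, is what makes the conjecture fail; Theorem~\ref{theoremD} is the wrong analogy here.

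\textbf{The missing computation.} With $n=2$, $4\mid d$, and $p\equiv\frac d2+1\pmod d$ one has $p\equiv1\pmod{D(\Delta_d)}$ since $D(\Delta_d)=d/2$ is even, yet the diagonal face $\tau$ with vertex matrix $M=\smallpmatrix{-1&1\\0&d}$ has residues $r=(k/d,k/d)$, $0\le k<d$, with weight $w(r)=2k/d$; for odd $k<d/2$ one gets $\{pr_i\}=k/d+1/2$, so $w(\{pr\})=w(r)+1$ and (as $p^2\equiv1\pmod d$) the orbit-averaged slope is $w(r)+\tfrac12>w(r)$. Hence $\tau$ is not generically ordinary, and pushing this up through a $3$-simplex $\Pi_1$ containing $\tau$ by the boundary decomposition theorem of \cite{WAN93} shows $\Delta_d$ is not generically ordinary for the shape-\ref{2.1} family, refuting Conjecture~\ref{Con1.10}. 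This explicit residue-orbit instability — forced by the absence of the degree-$d/2$ terms — is precisely the step your proposal defers to ``real work.''

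Minor point: the coordinate facets $\Delta\cap\{x_j=0\}$ contain the origin, so Wan's facial decomposition does not recurse through them; the only two pieces are the top and far facets, and your recursive chain of blocks is spurious.
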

Le proved this conjecture for odd $d$ and this implies that the A-S conjecture
for this family holds for odd $d$.   He then conjectured
that it is also true for even $d$.

In this paper we give a counterexample to this conjecture.
\begin{thm}
Le's conjecture is false in general.
\end{thm}
Note that the reason that Le's conjecture fails is because
when $d$ is even, the family polynomials  $\widetilde{\mathcal{M}}_p(\Delta)$ studied in this paper and the family of polynomials $\mathcal{M}_p(\Delta)$ are not identical. In other words, the family Le considered is slightly smaller and thus is not a universal family.
 A natural question is to determine whether  Le's conjecture
is true when the family of $f$ defined by \ref{2.1} becomes larger, i.e., the condition
"$\deg g<d/2$" is  replaced by "$\deg g\leq d/2$".

\section{ A counterexample to Le's conjecture}

A polynomial $f(x)=f(x_1, x_2, \ldots, x_n)$ in $\f_q[x_1, x_2, \ldots, x_n]$ is called a  {\it Deligne polynomial} if
      its leading homogeneous form $f_d$ defines a smooth projective hypersurface in the projective space $\mathbb{P}^{n-1}$, where $d$ is the degree of $f$.    Deligne polynomials were extensively studied by Deligne \cite{De81},  Katz \cite{KA07}, Browning and Heath-Brown \cite{BH09},  Le \cite{LE12}, Fu and Wan \cite{FW20}.

Deligne shows that if $f$ is a Deligne polynomial, then  $L^*(f,T)^{(-1)^{n-1}}$ is polynomial
of degree $(d-1)^n$ and this fact immediately gives a fundamental estimate
\[|\sum_{x \in {\f_{q^k}}} \psi ({f(x)})|  \leq (d-1)^n q^{\frac k2}.\]

Motivated by the study of exponential sums, it is natural to consider a general class of Laurent polynomials of the following form
\[f(x_0, x_1, \ldots, x_n)=x_0 h(x_1, x_2, \ldots, x_n)+g(x_1, x_2, \ldots, x_n)+1/x_0, \]
  where $\deg h=d$,  $\deg g<d/2$, and  $h$ is a Deligne polynomial in $n$ variables.

From now on, $\Delta$ is always used to denote the Newton Polytope of $f$ and  $e_0, e_1, \ldots, e_n$ be the standard basis for $\R^{n+1}$.
 Let $\delta_d$ be the polytope spanned by $-e_0, e_0+de_1, \ldots, e_0+de_n$, and  $\delta_d'$ spanned by $e_0,e_0+de_1, \ldots, e_0+de_n$.
  They are the only two  co-dimension $1$ faces of $\Delta$ that do not contain the origin.  Let $\Delta_d$($\Delta_d'$, respectively) be the convex hull of $\delta_d$($\delta_d'$, respectively) and the origin.
  Please see Figure 3 for an example.


%
%

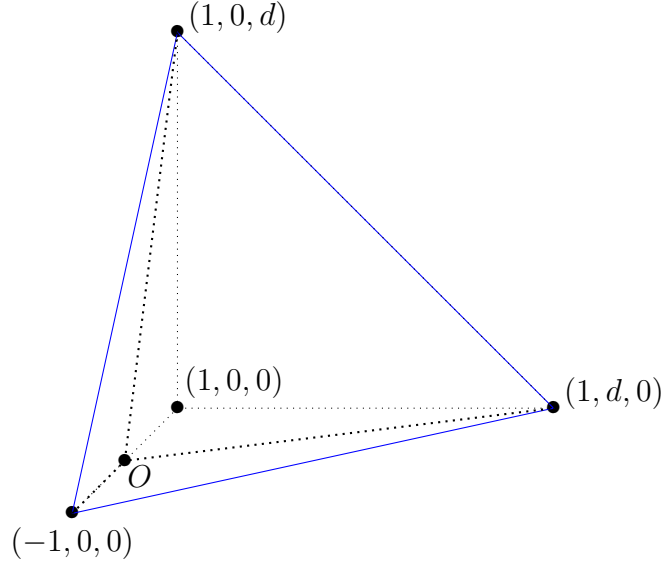
\begin{figure}[t]
\begin{center}
\begin{tikzpicture}
\draw(0,0) node {$\bullet$}; 
\node (n2) at (-0.7,-0.7) {$\bullet$}; 
\node (n1) at (-1.4,-1.4) {$\bullet$}; 
\node (v5) at (0,5) {$\bullet$}; 
\node(v0) at (5,0) {$\bullet$}; 
\draw[blue] (-1.4,-1.4) -- (0,5) -- (5,0) -- (-1.4,-1.4);
\draw[thick,dotted] (0,5) -- (-0.7,-0.7) -- (5,0);
\draw[thick,dotted]  (-1.4,-1.4) -- (-0.7,-0.7) ;
\draw[dotted] (0,0) -- (0,5) -- (5,0)--(0,0);
\draw[dotted] (0,0) -- (-1.4,-1.4);
\draw (n1)+(0,-0.4) node {$(-1,0,0)$};
\draw (n2)+(0.2,-0.2) node {$O$};
 \draw (0,0)+(0.75, 0.35) node {$(1, 0, 0)$};
\draw (v5)+(0.8,0.2) node {$(1, 0, d)$};
\draw (v0)+(0.8,0.2) node {$(1,d,0)$};
\end{tikzpicture}
\end{center}
\caption{$\Delta \supseteq \Delta_d \supseteq \delta_d$ ($\delta_d$ is the blue triangle)}
\label{fig6}
\end{figure}

Since the defining equation of $\delta_d$ is
\[ X_1-\frac{2}{d}X_2-\frac{2}{d}X_3-\cdots-\frac{2}{d}X_{n+1}=1,\]
one computes that $$D(\Delta_d)=
\begin{cases}
d &  d \mathrm{~is~odd;}\\
\frac{d}{2} & d \mathrm{~is~even}.
\end{cases}$$

For simplicity we denote $D(\Delta_d)$ by $D$ .
The Hodge polygon $HP(\Delta_d)$ can be determined explicitly. We would like to know if $\Delta_d$ satisfies Conjecture \ref{AS}.

 Let $\widetilde{\mathcal{M}}_p(\Delta_d)\subseteq \mathcal{M}_p(\Delta_d)$ be the set of non-degenerate $f$ of shape \ref{2.1} with $\Delta(f)=\Delta_d$.
 Let
 \[\widetilde{GNP}(\Delta_d,p): = \inf_{f \in \widetilde{\mathcal{M}}_p(\Delta_d)} NP(f).\]
Note that  $\widetilde{GNP}({\Delta_d},p) \geq {GNP}({\Delta_d},p)\geq HP(\Delta_d)$.

  By Wan's facial decomposition theorem,  Le's conjecture is then equivalent to
  \[\widetilde{GNP}({\Delta_d},p) = HP(\Delta_d) \Longleftrightarrow p \equiv 1 \md{D}.\]
  Applying the regular decomposition theorem, Le \cite{LE12} proved that
\begin{thm}
If $p \not\equiv 1 \md{D}$ then $GNP(\Delta_d, p)$ lies strictly above $HP(\Delta_d)$.
\label{prop_nonordinary}
In particular, for any odd $d$, $p \equiv 1 \md{D}$ if and only if  $\widetilde{GNP}(\Delta_d,p) = HP(\Delta_d)$.
\end{thm}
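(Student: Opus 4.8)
The plan is to reduce everything, via Wan's facial decomposition theorem, to the unique codimension-$1$ face $\delta_d$ of $\Delta_d$ that misses the origin, and then to a single diagonal Laurent polynomial supported on that face. Since $\Delta_d$ is the cone over $\delta_d$ with apex $0$, $\delta_d$ is indeed its only facet not containing $0$; and testing a monomial $x_0^ax^I$ against the affine equation $-X_1+\tfrac2d(X_2+\cdots+X_{n+1})=1$ of $\delta_d$ shows that the only lattice points of $\Delta_d$ lying on $\delta_d$ are $-e_0$, the points $e_0+\sum_i j_ie_i$ with $j_i\geq0$ and $\sum_i j_i=d$, and (only when $d$ is even) the points $\sum_i j_ie_i$ with $\sum_i j_i=d/2$. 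Hence, for $f$ of the shape \ref{2.1} with $\Delta(f)=\Delta_d$, the terms of $g$ (degree $<d/2$) are strictly interior and $f^{\delta_d}=x_0h_d(x_1,\dots,x_n)+x_0^{-1}$, where $h_d$ is the degree-$d$ leading form of $h$; by the facial decomposition theorem $f$ is ordinary if and only if this facet polynomial is ordinary with respect to $\delta_d$.

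The structural point is that the $n+1$ exponent vectors $-e_0,\ e_0+de_1,\dots,e_0+de_n$ are linearly independent (their matrix has determinant $\pm d^n$), so $\delta_d$ is a simplex; accordingly, whenever $p\nmid d$ the \emph{diagonal} Laurent polynomial $f_0:=x_0(x_1^d+\cdots+x_n^d)+x_0^{-1}$ is non-degenerate, has $\Delta(f_0)=\Delta_d$, and is of the shape \ref{2.1} (take $h=x_1^d+\cdots+x_n^d$, which is Deligne precisely because $p\nmid d$, and $g=0$). By the regular decomposition theorem this diagonal polynomial attains the generic Newton polygon over the simplicial cone $\Delta_d$, i.e.\ $GNP(\Delta_d,p)=NP(f_0)$ for $p\nmid d$. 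I would then compute both sides explicitly: the weight counts $W_{\Delta_d}(k)$ and $H_{\Delta_d}(k)$ give $HP(\Delta_d)$ in closed form, while $NP(f_0)$ is given by the Adolphson--Sperber/Wan formula for diagonal polynomials in terms of $p$-adic valuations of Gauss sums (Stickelberger's theorem); comparing, $NP(f_0)=HP(\Delta_d)$ if $p\equiv1\md D$ and $NP(f_0)$ lies strictly above $HP(\Delta_d)$ if $p\not\equiv1\md D$.

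These computations yield both assertions. If $p\not\equiv1\md D$ and $p\nmid d$ then $GNP(\Delta_d,p)=NP(f_0)>HP(\Delta_d)$; if $p\mid d$ (which for $d\geq3$ never satisfies $p\equiv1\md D$) then $GNP(\Delta_d,p)>HP(\Delta_d)$ by the general necessary condition $p\equiv1\md{D(\Delta)}$ for generic ordinariness; either way $GNP(\Delta_d,p)$ lies strictly above $HP(\Delta_d)$, using also that $GNP(\Delta_d,p)\geq HP(\Delta_d)$ with equal endpoints \cite{AS89}. For the equivalence with $d$ odd: the displayed ``$\Longrightarrow$'' for $\widetilde{GNP}$ is the contrapositive of the above (since $\widetilde{GNP}(\Delta_d,p)\geq GNP(\Delta_d,p)$); conversely, if $p\equiv1\md D=\md{d}$ then $p\nmid d$, so $f_0\in\widetilde{\mathcal{M}}_p(\Delta_d)$ and, by the comparison, $HP(\Delta_d)\leq GNP(\Delta_d,p)\leq\widetilde{GNP}(\Delta_d,p)\leq NP(f_0)=HP(\Delta_d)$, forcing equality throughout.

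The main obstacle is the explicit comparison between the Hodge polygon of the simplicial cone $\Delta_d$ and the diagonal Newton polygon of $f_0$: this requires a precise count of lattice points of $\delta_d$ by weight together with the Stickelberger/Gross--Koblitz evaluation of the slopes of a diagonal, and it is exactly here that the parity of $d$ is decisive. When $d$ is even, $\delta_d$ additionally carries the homogeneous degree-$d/2$ monomials in $x_1,\dots,x_n$, which $\mathcal{M}_p(\Delta_d)$ may use but which the constraint $\deg g<d/2$ forbids in $\widetilde{\mathcal{M}}_p(\Delta_d)$; the thin facet polynomial $x_0h_d+x_0^{-1}$ then still controls $GNP(\Delta_d,p)$ but no longer controls $\widetilde{GNP}(\Delta_d,p)$, and this gap is precisely what the counterexample exploits.
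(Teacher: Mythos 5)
The paper does not prove this theorem at all: it is quoted as a result of Le, with the one-line attribution that Le deduced it by applying his regular decomposition theorem. So there is no in-paper argument to compare against line by line; what follows assesses your reconstruction on its own terms.

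Your overall skeleton is reasonable: restrict to the single non-origin facet $\delta_d$ of $\Delta_d$ by facial decomposition, verify that the only lattice points of $\Delta_d$ on $\delta_d$ are $-e_0$, the $e_0+\sum j_ie_i$ with $\sum j_i=d$, and (only for even $d$) the $\sum j_ie_i$ with $\sum j_i=d/2$, take the Fermat-type diagonal $f_0=x_0(x_1^d+\cdots+x_n^d)+x_0^{-1}\in\widetilde{\mathcal{M}}_p(\Delta_d)$ when $p\nmid d$, sandwich $HP(\Delta_d)\leq GNP(\Delta_d,p)\leq\widetilde{GNP}(\Delta_d,p)\leq NP(f_0)$, and reduce everything to the Stickelberger evaluation of $NP(f_0)$. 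That logic for the ``in particular'' equivalence is sound.

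There are, however, two genuine gaps. First, the identity $GNP(\Delta_d,p)=NP(f_0)$ ``by the regular decomposition theorem'' is not justified and, as stated, is almost certainly wrong as a general principle: the facial and related decomposition theorems control whether the generic $f$ is \emph{ordinary}, i.e.\ whether $GNP=HP$, but they do not say that the GNP of the full family equals the Newton polygon of the vertex-supported diagonal when ordinariness fails. You do not actually need this identity: the first assertion of the theorem, that $p\not\equiv1\!\pmod D$ forces $GNP(\Delta_d,p)>HP(\Delta_d)$, is exactly the general necessary condition $p\equiv1\!\pmod{D(\Delta)}$ for generic ordinariness which the paper already records, and you invoke it yourself for the $p\mid d$ subcase; use it uniformly and drop the claimed equality. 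Second, and more seriously, the substantive content of the theorem, namely that when $d$ is odd and $p\equiv1\!\pmod d$ the diagonal $f_0$ is ordinary (so $NP(f_0)=HP(\Delta_d)$), is only asserted, via an appeal to Stickelberger/Gross--Koblitz that is never carried out. You explicitly flag this as ``the main obstacle,'' which is honest, but it is precisely the step where the parity of $d$ enters and where the proof must actually be done; without the weight count $W_{\Delta_d}(k)$, the Hodge numbers $H_{\Delta_d}(k)$, and the matching $p$-adic valuations of the relevant Gauss sums, the argument is an outline rather than a proof.
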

This implies that A-S conjecture for the family of polynomials (\ref{2.1}) holds for odd $d$.   Le then conjectured
that it also holds for even $d$. We now give a counterexample to this conjecture.

\begin{thm}
Le's conjecture is false in general.
\end{thm}

\begin{proof}
   Recall $\widetilde{\mathcal{M}}_p(\Delta_d)\subseteq \mathcal{M}_p(\Delta_d)$ be the set of non-degenerate $f$ of shape \ref{2.1} with $\Delta(f)=\Delta_d$. It suffices to prove
   $$\widetilde{GNP}(\Delta_d,p) \ne HP(\Delta_d).$$
 Let  $f$ be a polynomial in $\widetilde{\mathcal{M}}_p(\Delta_d)$.
 If we define
\[f^{\delta_d}=x_0 h(x_1, x_2, \ldots, x_n)+1/x_0, \]
and
\[f^{\delta_d'}=x_0 h(x_1, x_2, \ldots, x_n), \]
then equivalently the facial decomposition theorem implies that
     $f$ is generically ordinary if and only if both  $f^{\delta_d}$ and $f^{\delta_d'}$ are generically ordinary.
Since Le has already showed that  $f^{\delta_d'}$ is generically ordinary, his conjecture is then equivalent to the statement that the polynomial $f^{\delta_d}$ is also generically ordinary.

\begin{figure}[t]
\begin{center}
\begin{tikzpicture}
\node (v5) at (0,3) {$\bullet$}; 
 \node (v3) at (1.5,0) {$\bullet$}; 
 \node(v0) at (3,3) {$\bullet$}; 
  \node(v1) at (0.6,3) {$\bullet$}; 
  \node(v2) at (0.75,1.5) {$\bullet$}; 
  \node(v4) at (2.25,1.5) {$ \small\circ$}; 
\draw[] (0,3) -- (1.5, 0) -- (3,3)--(0,3);
  \draw (v5)+(-0.8,0.3) node {$(1, 0, d)$};
 \draw (v0)+(0.8,0.3) node {$(1,d,0)$};
    \draw (v2)+(-0.9,0) node {$(0, 0, \frac d2)$};
  \draw (v3)+(0.3,-0.4) node {$(-1,0,0)$};
    \draw (v1)+(0.8,0.3) node {$(1,1,d-1)$};
   \draw (v4)+(0.9,0) node {$(0,\frac d2,0)$};
\end{tikzpicture}
\end{center}
\caption{ A face $\tau\subseteq\delta_d$  for  $n=2$}
\label{fig7}
\end{figure}
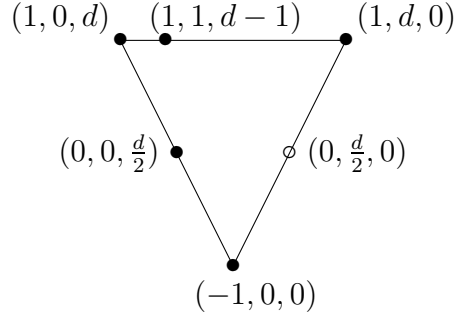


Suppose now $n=2$, $4\mid d$ and  $p\equiv \frac d2+1\md{d}$.
Let $\tau$ be a 1-dimensional face spanned by $(1, d, 0)$ and $(-1, 0, 0)$ on the two dimensional face $\delta_d$ (see Figure 4 for the example).
Its matrix $M$ is given by
\[M=\begin{bmatrix}
-1 & 1 \\
0 & d
 \end{bmatrix}.\]
 Clearly $\det M=-d$.

Let $r=(r_1, r_2)$.
Solving the system of linear equations \[
M\left( \begin{matrix} r_1 \\ r_2 \end{matrix}\right) \equiv 0 \md{1}, r_i \in \mathbb{Q}\cap[0, 1),\]
 we have
 \[r_1=r_2=\frac kd (0\leq k\leq d-1).\]
 This gives  $w(r)=\frac {2k}d$ and  hence $r$ is stable for even $k$.

 However, since  $p\equiv \frac d2+1\md{d}$, $\{pr_1\}=\{pr_2\}=
 \{\frac {(1+d/2)k}{d}\}$, for odd $k$ and $k<\frac d2$ we have
 $\{pr_1\}=\{pr_2\}= \frac kd+\frac 12$.  This shows that $w(r)$ is not stable for odd $k$ and $k<\frac d2$ and hence $\delta$ is not generically ordinary.

%
%

\begin{figure}[t]
\begin{center}
\begin{tikzpicture}
\draw [fill=black](-0.7,-0.7) circle (0.05cm);
\draw [fill=black](-1.4,-1.4) circle (0.05cm);
\node (n1) at (-1.4,-1.4) {};
\node(v5) at (1.8,-0.7) {};
\draw [fill=black](1.8,-0.7) circle (0.05cm);
\node (v4) at (0,5) {};
\draw [fill=black](0,5) circle (0.05cm);
 \node(v3) at (1.25,3.75) {};
 \draw [fill=black](1.25,3.75) circle (0.05cm);
\node(v2) at (2.5,2.5) {};
\draw [fill=black](2.5,2.5) circle (0.05cm);
\node(v1) at (3.75,1.25) {};
\draw [fill=black](3.75,1.25) circle (0.05cm);
\node(v0) at (5,0) {};
\draw [fill=black](5,0) circle (0.05cm);
\draw [thick, dotted](-0.7,-0.7)--(2.5,2.5);
\draw (-1.4,-1.4)--(2.5,2.5);
\draw [thick, dotted](-0.7,-0.7)--(1.25,3.75);
\draw (-1.4,-1.4)--(1.25,3.75);
\draw [thick, dotted](-0.7,-0.7)--(3.75,1.25);
\draw (-1.4,-1.4)--(3.75,1.25);
\draw (-1.4,-1.4) -- (0,5) -- (5,0) -- (-1.4,-1.4);
\draw[thick,dotted] (0,5) -- (-0.7,-0.7) -- (5,0);
\draw[thick,dotted]  (-1.4,-1.4) -- (-0.7,-0.7) ;
 \draw (n1)+(0,-0.4) node {$(-1,0,0)$};
\draw (v4)+(0.8,0.2) node {$(1,0,4)$};
\draw (v3)+(0.8,0.2) node {$(1,1,3)$};
 \draw (v2)+(0.8,0.2) node {$(1,2,2)$};
\draw (v1)+(0.8,0.2) node {$(1,3,1)$};
\draw (v0)+(0.8,0.2) node {$(1,4,0)$};
\draw[->,thin] (3.5,0.2) -- (4,-0.8) node[anchor=west] {$\Pi_1$};
\draw (v5)+(0.8,0.2) node [anchor=north] {$(0,2,0)$};
 \end{tikzpicture}
\end{center}
\caption{A boundary decomposition for the polytope $\Delta_d$, $d=4, n=2$}
\label{fig7}
\end{figure}
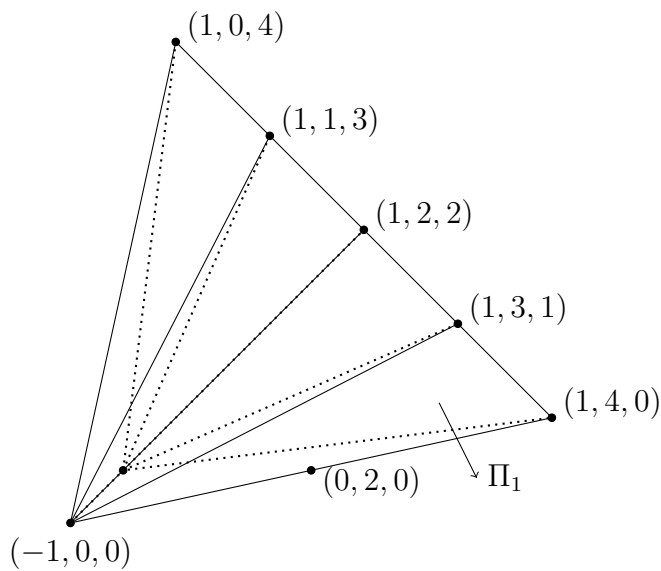
Let $\Pi_1$ be  the sub polytope spanned by $(0, 0, 0), (-1, 0, 0), (1, d, 0)$ and $(1, d-1, 1)$ (please see the figure below for an illustration). Since its face $\tau$ is not generically ordinary,  $\Pi_1$ is also  not generically ordinary.
 By the boundary decomposition theorem (\cite{WAN93}, Theorem 5.1, see also Figure 5 for the graph) $\Delta_d$ is not generically ordinary. The proof is complete.


\end{proof}

{\bf Acknowledgement.}   The author wishes to thank Professor  Daqing Wan for his instructive suggestions.

\end{document}